\newcommand{\Z}{\mathbb Z}
\newtheorem{definition}{{\bf Definition}}[section]
\newtheorem{theorem}[definition]{{\bf Theorem}}
\newtheorem{proposition}[definition]{\noindent {\bf Proposition}}
\newtheorem{lemma}[definition]{\noindent {\bf Lemma}}
\newtheorem{claim}[definition]{\noindent {\bf Claim}}
\newtheorem{problem}[definition]{\noindent {\bf Problem}}
\def\endproof{\hfill {\kern 6pt\penalty 500
    \raise -0pt\hbox{\vrule \vbox to5pt {\hrule width 5pt
        \vfill\hrule}\vrule}}}
\begin{document}

\title[Claw-free graphs and  a reconstruction problem]
{Claw-freeness, $3$-homogeneous subsets of a graph and  a reconstruction problem}

\date{November 22, 2010}
\thanks{}

\author{Maurice Pouzet}
\address{ICJ, Universit\'e de Lyon, Universit\'e Claude Bernard Lyon 1, 43 boulevard du 11 novembre 1918, 69622 Villeurbanne cedex, France}
\curraddr{}
\email{pouzet@univ-lyon1.fr}
\author{Hamza Si Kaddour}
\address{ICJ, Universit\'e de Lyon, Universit\'e Claude Bernard Lyon 1, 43 boulevard du 11 novembre 1918, 69622 Villeurbanne cedex, France}
\curraddr{}
\email{sikaddour@univ-lyon1.fr}
\author{Nicolas Trotignon}
\address{CNRS, LIAFA, Universit\'e Paris Diderot, Paris 7, Case 7014, 75205 Paris Cedex 13, France}
\curraddr{}
\email{nicolas.trotignon@liafa.jussieu.fr}

\thanks{Done under the auspices of the French-Tunisian CMCU "Outils math\'ematiques pour l'Informatique" 05S1505}

\subjclass[2000]{05C60; 05C99.}

\keywords{Graphs; claw-free graphs; cliques;  independent subsets; Paley graphs}

\begin{abstract}
  We describe
${\rm Forb}\{K_{1,3}, \overline {K_{1,3}}\}$, the class of graphs $G$ such that
 $G$ and its complement
$ \overline{G}$ are claw-free. With few exceptions, it is made  of graphs whose connected components consist of cycles of length
at least 4, paths, and of the complements of these graphs. Considering the hypergraph ${\mathcal H} ^{(3)}(G)$ made of the $3$-element subsets of the vertex set of a graph $G$ on
which $G$ induces  a clique or an independent subset, we deduce  from above a  description of the Boolean sum $G\dot{+}G'$  of two graphs  $G$ and $G'$ giving  the same hypergraph. We indicate the role of  this latter description in a reconstruction problem of graphs up to complementation.
\end{abstract}

\maketitle

\section{Results and motivation}
  Our notations and terminology mostly follow \cite {Bo}. The graphs we consider in this paper are  undirected, simple and have no  loop.   That is  a {\it graph} is a pair $G:= (V, \mathcal E)$, where $\mathcal E$ is a subset of $[V]^2$, the set of $2$-element subsets of $V$. Elements of $V$ are the {\it vertices} of $G$ and elements of $\mathcal E$ its {\it edges}.  We denote by $V(G)$ the vertex set of $G$ and  by $E(G)$ its edge set. We look at members of $[V]^2$ as unordered pairs of distinct vertices. If $A$ is a subset of $V$, the pair $G_{\restriction A}:=(A, \mathcal E\cap [A]^2)$ is the \emph{graph induced by $G$ on $A$}. The {\it complement} of $G$ is the simple graph
 ${\overline G}$ whose vertex set is $V$ and whose edges are the unordered pairs of nonadjacent and distinct vertices of $G$, that is $\overline G =(V, {\overline {\mathcal E}})$, where ${\overline {\mathcal E}}=[V]^2\setminus \mathcal E$.
We denote by
$K_3$  the complete graph on
$3$ vertices and  by $K_{1,3}$ the graph  made of a
vertex linked to a
 $\overline {K_{3}}$. The  graph $K_{1,3}$ is called a \emph{claw}, the graph {$\overline {K_{1,3}}$ a {\it co-claw}.

 In \cite{BM}, Brandst{\"a}dt and Mahfud give a structural characterization
of graphs with no claw and no co-claw; they deduce
several algorithmic consequences (relying on bounded clique width). We will give a more precise characterization of such graphs. \\
We denote by $A_{6}$  the graph on $6$ vertices made of a $K_3$  bounded by
three $K_3$ (cf. Figure 1) and by $C_n$ the $n$-element cycle, $n\geq 4$. We denote by
$P_9$ the Paley graph on $9$ vertices (cf. Figure 1). Note that $P_9$ is isomorphic to  its complement
$\overline{P_9}$, to the line-graph of  $K_{3,3}$ and also to $K_3\Box K_3$, the  cartesian  product of $K_3$ by
itself  (see \cite{Bo} page 30 if needed for a definition of the \emph{cartesian product of graphs}, and  see \cite{VW}
page 176 and  \cite {Bo} page 28 for a definition and basic properties of \emph{Paley graphs}).
%FIGURE
\begin{figure}[H]
\begin{center}
\includegraphics[width=4.5in]{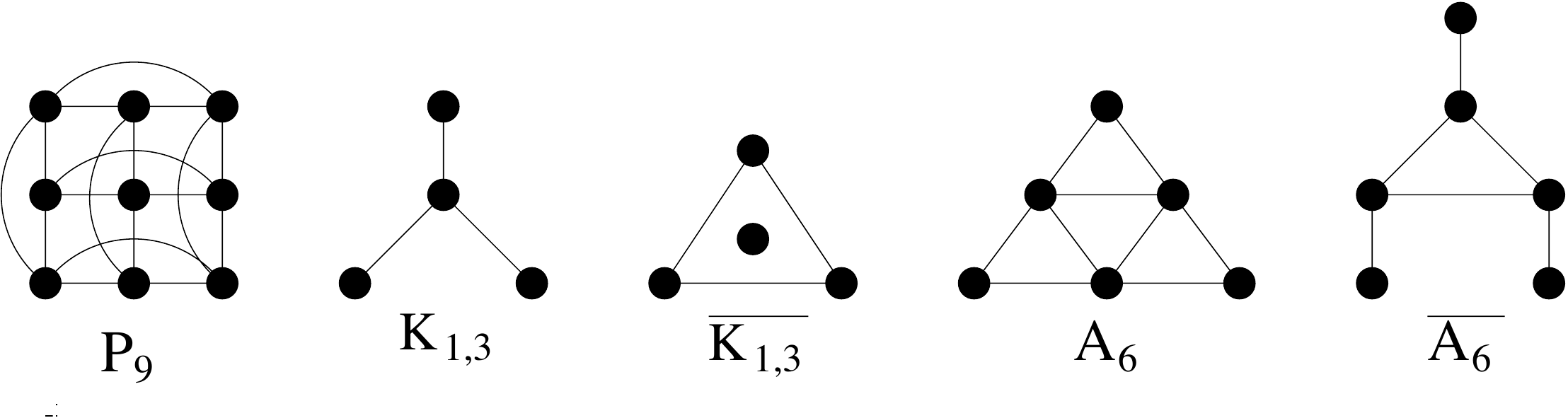}
\end{center}
\caption{} \label{1}
\end{figure}

Given a set  $\mathcal F$ of graphs, we denote by
${\rm Forb} \mathcal F$ the class of graphs $G$ such that
no member of $\mathcal F$ is isomorphic to an
induced subgraph of $G$.    Members of
${\rm Forb} \{K_{3}\}$, resp. ${\rm Forb}\{K_{1,3}\}$ are called  {\it triangle-free}, resp. {\it
claw-free}  graphs.

The main result of this note asserts:

 \begin{theorem} \label{K_{1,3}} The class
${\rm Forb}\{K_{1,3}, \overline {K_{1,3}}\}$ consists of $A_6$; of the induced
subgraphs of $P_9$; of graphs whose connected components are
cycles of length at least $4$ or paths; and of the complements of these graphs.
\end{theorem}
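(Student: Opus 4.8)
The plan is to prove both inclusions. The inclusion of the four listed families into ${\rm Forb}\{K_{1,3},\overline{K_{1,3}}\}$ is the routine direction: a disjoint union of paths and of cycles of length $\ge 4$ has maximum degree $\le 2$, hence is claw-free, and is triangle-free, hence contains no induced $\overline{K_{1,3}}$; since the class is closed under complementation, the complements of such graphs lie in it as well; finally $P_9$ is claw-free (it is the line graph of $K_{3,3}$) and is isomorphic to its own complement, so all its induced subgraphs are in the class, while $A_6$ is checked directly. The substance is the converse: every $G$ with both $G$ and $\overline G$ claw-free is of one of these types. Because the class is self-complementary, we argue up to complementation throughout, and we use freely that ``$\overline G$ is claw-free'' is the same as ``$G$ is $\overline{K_{1,3}}$-free'', i.e. every triangle of $G$ is dominated by every vertex of $G$.

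First reduce to the connected case. If $G$ is disconnected, then for any component $C$ and any vertex $v$ outside $C$ the whole of $C$ lies in the non-neighbourhood of $v$; since $G$ is $\overline{K_{1,3}}$-free, $C$ contains no triangle, so every component of $G$ is triangle-free. A connected triangle-free claw-free graph has maximum degree $\le 2$ and girth $\ge 4$, hence is a path or a cycle of length $\ge 4$; thus $G$ is a disjoint union of paths and of cycles of length $\ge 4$. The case ``$\overline G$ disconnected'' is symmetric. So assume $G$ and $\overline G$ both connected. If $G$ is triangle-free the same local argument shows $G$ is a single path or a single cycle of length $\ge 4$; dually, if $\overline G$ is triangle-free (i.e. $\alpha(G)\le 2$) then $G$ is the complement of one. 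It remains to treat the case in which $G$ contains both a triangle and an independent set of size $3$.

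In this case I would first prove $\omega(G)\le 3$ and, dually, $\alpha(G)\le 3$. Given a clique $K$ with $|K|\ge 4$, $\overline{K_{1,3}}$-freeness forces every vertex outside $K$ to miss at most two vertices of $K$; given also an independent triple $S$, either some vertex of $K$ is complete to $S$ (a claw), or the ``missed pairs'' of the three vertices of $S$ cover $K$, and in that situation a short finite analysis exhibits either an induced $\overline{K_{1,3}}$ of the form $\{v_i,v_j,s\}\cup\{s'\}$ inside $K\cup S$ or an induced $K_{1,3}$ centred at a vertex of $K$; this rules out $|K|\in\{5,6\}$ immediately and, with a somewhat longer but still bounded case check, also $|K|=4$. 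Now fix a triangle $T=\{a,b,c\}$ and partition $V\setminus T$ into the classes $V_A$ ($\emptyset\ne A\subseteq T$) by neighbourhood in $T$. Claw-freeness forces each $V_A$ to be an independent set of size $\le 2$ (via a claw centred at a vertex of $T$, using $\omega(G)\le 3$ when $|A|\ge2$), and $\overline{K_{1,3}}$-freeness restricts how two distinct classes may attach to each other. Feeding these local constraints together with the symmetric data coming from a fixed independent triple $S$ (where now every vertex of $T$ has $1$ or $2$ neighbours in $S$, and vice versa), one forces $|V(G)|\le 9$ and then recognises $G$: either $G\cong A_6$, or $G$ embeds as an induced subgraph of $P_9$, using that $K_3\Box K_3$ is, up to isomorphism, the only graph on $9$ vertices that survives all of these constraints.

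The main obstacle is exactly this last structural step: converting the bound $|V(G)|\le 9$ and the per-triangle local data into the precise alternative ``$A_6$ or an induced subgraph of $P_9$''. The bookkeeping is delicate because several of the classes $V_A$ can be nonempty at once, and one must alternate the two forbidden subgraphs — claws to force that the classes are independent and small, co-claws to force that every vertex meets every triangle — while playing $T$ against an independent triple $S$ to pin down all edges among $T$, $S$ and the remaining vertices. I expect the cleanest organisation to first dispose of the subcase $V_{\{a,b,c\}}\ne\emptyset$ (a vertex complete to a triangle, which together with $\omega(G)\le 3$ heavily constrains neighbourhoods), and then the subcase where every vertex outside $T$ has exactly one or exactly two neighbours in $T$, reconstructing $P_9$ (or detecting $A_6$) from the adjacencies so forced.
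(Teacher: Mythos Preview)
Your outline diverges from the paper's argument and, as it stands, is not a proof: the two places where the real work happens (the bound $\omega(G)\le 3$ and the reconstruction of $G$ from the triangle-neighbourhood partition) are asserted but not carried out, and you say so yourself. Worse, one of the structural claims you do state is false. For $|A|=1$, say $A=\{a\}$, take $u,v\in V_{\{a\}}$; then $a$ is adjacent to $u,v,b$ with $u\not\sim b$ and $v\not\sim b$, so claw-freeness at $a$ forces $u\sim v$. Hence $V_{\{a\}}$ is a \emph{clique}, not an independent set. (Your parenthetical is correct only when $|A|=2$: there $\omega\le 3$ gives independence and then a claw at $a$ bounds the size.) With $V_T=\emptyset$ and each of the six remaining classes of size at most $2$, the naive count yields only $|V|\le 15$, so the leap to $|V|\le 9$ needs the inter-class constraints you allude to but never write down. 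The approach is salvageable in principle, but the bookkeeping you anticipate is genuinely unpleasant.

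The paper avoids all of this by switching to line-graph machinery. After the same reduction to $G$ and $\overline G$ connected, it successively rules out an induced $P_6$, then $A_6$ (showing that if $\overline{A_6}$ appears it must be all of $G$), and then the diamond. At that point $G$ is connected, claw-free and diamond-free, so by the Harary--Holzmann theorem $G=L(R)$ for some connected triangle-free $R$; the co-claw-freeness of $G$ then translates into degree and incidence restrictions on $R$ that force $R$ to be a subgraph of $K_{3,3}$, whence $G$ is an induced subgraph of $L(K_{3,3})=P_9$. This sidesteps both the $\omega\le 3$ bound and the partition analysis entirely; the only case-checks are short and local (a $P_6$ plus one extra vertex, an $\overline{A_6}$ plus one extra vertex, a co-diamond plus connecting paths). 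If you want to keep your direct route, you must at minimum fix the description of the singleton classes, actually prove $\omega(G)\le 3$, and supply the argument cutting $15$ down to $9$; otherwise, passing to the root graph $R$ is a much cleaner organising idea.
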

As an immediate consequence of Theorem \ref {K_{1,3}}, note that the
graphs
 $A_{6}$ and  $\overline
{A_{6}}$ are the only members of ${\rm Forb}\{K_{1,3}, \overline
{K_{1,3}}\}$ which contain a $K_{3}$  and  a $\overline
{K_{3}}$ with no vertex in common.
 Note also that $A_{6}$ and  $\overline {A_{6}}$  are very important graphs for the study of how maximal cliques and stable sets overlap in general graphs. See the
main theorem of \cite{XGW2}, see also \cite{XGW1}.
Also, in \cite{Fa}, page 31, a list of all self-complementary
line-graphs is given. Apart from $C_5$, they are all induced subgraphs
of $P_9$.\\

From  Theorem \ref{K_{1,3}} we  obtain a characterization of  the Boolean sum of two graphs having the same $3$-homogeneous subsets. For that, we say that a  subset of vertices of a graph $G$ is {\it
homogeneous} if it is a clique or an independent set (note that the word homogeneous is used with this meaning in Ramsey  theory;  in other areas of graph theory it has other meanings, several in fact).
Let ${\mathcal H}^{(3)}(G)$ be the hypergraph having the same
vertices as
$G$ and whose
hyperedges are the
$3$-element homogeneous subsets of $G$. Given two graphs $G$ and $G'$
on the same vertex set $V$, we recall that the {\it
Boolean sum}
$G\dot{+}G'$ of $G$ and $G'$ is the graph  on $V$ whose edges are unordered pairs $e$ of
distinct vertices such that $e\in E(G)$ if and only if $e\notin E(G')$. Note that $E(G\dot{+}G')$ is the symmetric difference $E(G)\Delta E(G')$ of $E(G)$ and $E(G')$. The graph $G\dot{+}G'$ is also called the \emph{symmetric difference} of $G$ and $G'$ and denoted by $G\Delta G'$ in \cite{Bo}.
Given a graph $U$ with vertex set  $V$, the {\it
edge-graph} of
$U$ is the graph
$S(U)$ whose
vertices are the edges $u$ of $U$ and whose edges are  unordered pairs $uv$
such that
$u=xy$, $v=xz$ for three distinct elements $x,y,z\in V$ such that
$yz$  is
not an edge of $U$. Note that the edge-graph $S(U)$ is a spanning subgraph of $L(U)$, the \emph{line-graph} of $U$, not to be confused with it.

Claw-free graphs and triangle-free graphs are related by means
of the edge-graph
construction. Indeed, as it is immediate to see, for every graph $U$, we have:
$$U\in Forb \{K_{1,3}\}\Longleftrightarrow S(U)\in Forb \{K_{3}\} \ \ \ \ \ \ (\star)$$

Our characterization is this:

\begin{theorem}\label{S(U)}
Let $U$ be a graph. The following properties are equivalent:
\begin{enumerate}[{(1)}]
\item
There are two graphs $G$ and $G'$ having the same
$3$-element homogeneous subsets such that $U:=G\dot{+} G'$;
\item $S(U)$ and $S(\overline U)$ are bipartite;
\item Either (i) $U$ is  an induced subgraph of $P_9$\label{P9},
or (ii)   the connected components of
$U$,  or of its complement $\overline U$,  are cycles of even length or paths. \label{direct}
  \end{enumerate}
\end{theorem}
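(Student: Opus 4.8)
The plan is to prove the two equivalences $(1)\Leftrightarrow(2)$ and $(2)\Leftrightarrow(3)$ separately. For $(1)\Leftrightarrow(2)$ I would reinterpret bipartiteness of the edge-graphs $S(U)$ and $S(\overline U)$ as the existence of certain edge $2$-colourings, and then pass back and forth between such colourings and pairs $G,G'$. For $(2)\Leftrightarrow(3)$ I would combine the relation $(\star)$, Theorem~\ref{K_{1,3}}, and a few explicit computations, the only delicate inputs being the Paley graph $P_9$ and the graph $A_6$.

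\smallskip
\emph{Step 1: $(1)\Leftrightarrow(2)$.} The basic dictionary is that the vertices of $S(U)$ are the edges of $U$, while the edges of $S(U)$ correspond bijectively to the $3$-element subsets $T$ of $V$ on which $U$ induces exactly two edges (the edge of $S(U)$ joining the two edges of $U_{\restriction T}$, which meet at the unique degree-$2$ vertex of $U_{\restriction T}$). Thus a proper $2$-colouring of $S(U)$ is a map $c\colon E(U)\to\{0,1\}$ with $c(xy)\ne c(xz)$ whenever $xy,xz\in E(U)$ and $yz\notin E(U)$, and similarly a proper $2$-colouring of $S(\overline U)$ is a map $\bar c$ on $E(\overline U)$ with $\bar c(xy)\ne\bar c(xz)$ whenever $xy,xz\notin E(U)$ and $yz\in E(U)$. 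Given $(1)$, with $U=G\dot{+}G'$ and $\mathcal H^{(3)}(G)=\mathcal H^{(3)}(G')$, I would set $c(e)=1$ iff $e\in E(G)$ for $e\in E(U)$: if $xy,xz\in E(U)$, $yz\notin E(U)$ and (say) $c(xy)=c(xz)=1$, then on $T=\{x,y,z\}$ the graph $G$ contains $xy,xz$ and agrees with $G'$ on $yz$ while $G'$ contains neither $xy$ nor $xz$, so exactly one of $G_{\restriction T},G'_{\restriction T}$ is a triangle or an empty graph, contradicting $\mathcal H^{(3)}(G)=\mathcal H^{(3)}(G')$; the case of colour $0$ is symmetric, so $S(U)$ is bipartite. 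Since $\overline U=G\dot{+}\overline{G'}$ and $\mathcal H^{(3)}(\overline{G'})=\mathcal H^{(3)}(G')=\mathcal H^{(3)}(G)$, the same argument gives $S(\overline U)$ bipartite. Conversely, given proper $2$-colourings $c$ of $S(U)$ and $\bar c$ of $S(\overline U)$, I would take $G$ to have as edges the members of $E(U)$ coloured $1$ by $c$ together with the members of $E(\overline U)$ coloured $1$ by $\bar c$, and set $G'=G\dot{+}U$, so $U=G\dot{+}G'$. One then verifies $\mathcal H^{(3)}(G)=\mathcal H^{(3)}(G')$ on each $3$-set $T$ according to the number $j$ of edges of $U_{\restriction T}$: for $j\in\{0,3\}$ the graphs $G_{\restriction T}$ and $G'_{\restriction T}$ are equal (resp.\ complementary), hence simultaneously homogeneous; for $j=2$ (resp.\ $j=1$) the colouring $c$ (resp.\ $\bar c$) puts exactly one of the two relevant edges into each of $G_{\restriction T}$ and $G'_{\restriction T}$, so neither is homogeneous. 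This gives $(2)\Rightarrow(1)$.

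\smallskip
\emph{Step 2: $(2)\Rightarrow(3)$.} A bipartite graph has no $K_3$, so $S(U)$ and $S(\overline U)$ are triangle-free; by $(\star)$, both $U$ and $\overline U$ are claw-free, i.e.\ $U\in{\rm Forb}\{K_{1,3},\overline{K_{1,3}}\}$. Applying Theorem~\ref{K_{1,3}}, $U$ is $A_6$, an induced subgraph of $P_9$, a graph whose components are cycles of length $\ge 4$ or paths, or the complement of one of these. If the components of $U$ (or of $\overline U$) are cycles of length $\ge 4$ or paths, then, using $S(P_n)\cong P_{n-1}$, $S(C_m)\cong C_m$ for $m\ge 4$, and the fact that $S$ of a disjoint union is the disjoint union of the $S$'s, $S(U)$ (resp.\ $S(\overline U)$) is a disjoint union of paths and cycles $C_m$ with $m\ge 4$; bipartiteness forces every such $m$ to be even, which is case~(ii). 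If $U$ is an induced subgraph of $P_9$ we are in case~(i) (a complement of such a graph is again an induced subgraph of $P_9\cong\overline{P_9}$). Finally, to exclude $U=A_6$ and $U=\overline{A_6}$, I would compute directly that $S(A_6)\cong C_9$, which is not bipartite, contradicting $(2)$ (for $U=\overline{A_6}$ note $S(\overline U)=S(A_6)$).

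\smallskip
\emph{Step 3: $(3)\Rightarrow(2)$, and the main obstacle.} By the evident $U\leftrightarrow\overline U$ symmetry of $(2)$ and $(3)$, in case~(ii) I may assume the components of $U$ are cycles of even length or paths; then $U$ is bipartite, $S(U)$ is again a disjoint union of paths and even cycles (hence bipartite), and fixing a proper $2$-colouring $\phi\colon V(U)\to\{0,1\}$ of $U$ the map $\{x,y\}\mapsto\phi(x)+\phi(y)\pmod 2$ is a proper $2$-colouring of $S(\overline U)$ (if $xy,xz\notin E(U)$ and $yz\in E(U)$ then $\phi(y)\ne\phi(z)$, so the colours of $xy$ and $xz$ differ), whence $S(\overline U)$ is bipartite. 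In case~(i), $\overline U$ is again an induced subgraph of $P_9$, and since $S$ of an induced subgraph of $P_9$ is an induced subgraph of $S(P_9)$, it suffices to prove $S(P_9)$ bipartite: writing $P_9\cong K_3\Box K_3$ on vertex set $\{0,1,2\}^2$, the $18$ edges split into $9$ ``horizontal'' and $9$ ``vertical'' ones, and two edges that share a vertex have non-adjacent remaining endpoints precisely when one is horizontal and the other vertical, so $S(P_9)$ is bipartite with these two colour classes. The steps needing genuine work — the main obstacle — are exactly these two special configurations, the identification $S(A_6)\cong C_9$ and the bipartiteness of $S(P_9)$, together with the careful bookkeeping of the four $3$-set cases ($j=0,1,2,3$) in the construction of $G,G'$ in Step~1.
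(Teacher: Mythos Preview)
Your proposal is correct and follows essentially the same approach as the paper. The paper packages the equivalence $(1)\Leftrightarrow(2)$ into a separate lemma (Lemma~\ref{G,G',U}) with an intermediate pointwise condition, while you carry out the same construction and four-case verification on $3$-sets directly; your shortcut $\overline U=G\dot{+}\overline{G'}$ for deducing bipartiteness of $S(\overline U)$ is a mild variation on the paper's parallel treatment of $S(U)$ and $S(\overline U)$, and for $(2)\Leftrightarrow(3)$ both you and the paper use Theorem~\ref{K_{1,3}}, the identifications $S(A_6)\cong C_9$, $S(C_m)\cong C_m$, the horizontal/vertical bipartition of $S(P_9)$, and the colouring $\{x,y\}\mapsto\phi(x)+\phi(y)$ for $S(\overline U)$ when $U$ is bipartite.
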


As a consequence,  if  the  graph  $U$ satisfying Property (1) is disconnected, then $U$ contains no $3$-element cycle\label{claw}, moreover,
if $U$ contains no $3$-element cycle then each connected component of $U$ is  a cycle of even length,  or a path, in particular  $U$ is bipartite.\label{bipartite}

 The implication $(2)\Rightarrow  (3)$  in Theorem \ref{S(U)} follows immediately from Theorem \ref {K_{1,3}}. Indeed,
suppose  that Property (2) holds, that is  $S(U)$ and $S(\overline U)$ are bipartite, then
from Formula ($\star$) and from the fact that $S(A_6)$ and
$S(C_n)$, $n\geq 4$, are respectively isomorphic to $C_9$ and to $C_n$, we have: $$U\in
Forb\{K_{1,3}, \overline {K_{1,3}}, A_6,\overline {A_6}, C_{2n+1}, \overline {C_{2n+1}}
: n\geq 2\}.$$ From Theorem \ref{K_{1,3}}, Property (3) holds. The other implications, obtained by more straigthforward arguments, are given in Subsection \ref{Proof of TheoremS(U)}.

This leaves open the following:
\begin{problem}
Which pairs of graphs $G$ and $G'$  with the same
$3$-element homogeneous subsets have a given Boolean sum $U
:=G\dot{+} G'$?
\end{problem}

A partial answer, motivated by the reconstruction problem discussed below, is given in \cite{dlps2}. We  mention  that two graphs $G$ and $G'$ as above are determined  by the graphs induced on the  connected components of $U
:=G\dot{+} G'$  and on a system of distinct representatives of these connected components (Proposition 10 \cite{dlps2}).

A quite natural problem, related to the study of Ramsey numbers for triples,  is this:
\begin{problem} Which hypergraphs are of the form ${\mathcal H}^{(3)}(G)$ ?
\end{problem}
 An asymptotic  lower bound of the  size of ${\mathcal H}^{(3)}(G)$ in terms of $\vert V(G)\vert $ was established by A.W. Goodman \cite{goo}.

The motivation for Theorem \ref{S(U)} (and thus Theorem \ref {K_{1,3}}) originates in a reconstruction problem on graphs that we present now.
 Considering two graphs $G$ and $G'$ on the same set $V$ of vertices, we say that $G$ and $G'$ are {\it isomorphic up to complementation} if $G'$ is isomorphic to $G$ or to the complement $\overline G$ of $G$. Let $k$ be a non-negative integer,  we say that $G$ and $G'$ are {\it $k$-hypomorphic  up to complementation} if for every $k$-element subset $K$ of $V$, the graphs  $G_{\restriction K}$ and $G'_{\restriction K}$  induced by $G$ and $G'$ on $K$ are isomorphic up to complementation. Finally, we say that $G$ is {\it $k$-reconstructible up to complementation} if every graph  $G'$ which is $k$-hypomorphic to   $G$ up to complementation is in fact isomorphic to $G$ up to complementation.
 The following problem emerged from a question of P.Ille \cite{Ille}:
 \begin{problem} For which pairs $(k,v)$ of integers, $k<v$,   every graph  $G$  on $v$ vertices is $k$-reconstructible up to complementation?
\end{problem}

It is immediate to see that  if the conclusion of the problem above is positive for some $k,v$, then $v$ is distinct from $3$ and $4$ and, with a little bit of thought, that if $v\geq 5$ then $k\geq 4$ (see Proposition 4.1 of \cite{dlps1}).
 With  J. Dammak, G. Lopez \cite{dlps1} and  \cite{dlps2} we proved that the conclusion is positive if: \begin {enumerate} [{(i)}]
\item $4\leq k\leq v-3$  or
\item $4\leq k=v-2$ and $v\equiv 2\;  (mod \; 4)$.  \end{enumerate}
We do not know if in (ii) the condition $v\equiv 2\;  (mod \; 4 )$ can be dropped. For  $4\leq k=v-1$, we checked that the conclusion holds if $v=5$ and noticed that for larger values of $v$ it could be
negative or extremely hard to obtain, indeed, a positive conclusion  would imply that Ulam's reconstruction
conjecture holds (see Proposition 19 of \cite{dlps2}).

The reason for which Theorem \ref{S(U)} plays a role in that matter relies on properties of incidence matrices.

Given non-negative integers $t$, $k$, let $W_{tk}$ be  the $v\choose 2$ by $v\choose k$ incidence matrix of $0$'s and $1$', the rows of which are indexed by $t$-element subsets $T$ of $V$, the colums  are indexed by the $k$-element subsets $K$ of $V$, and where  the entry $W_{tk}(T, K) $ is $1$ if $T\subseteq K$ and is $0$ otherwise.  

Let $U:=G\dot{+}G'$ and  $M_U$ be  the column vector associated to the graph $U$. The matrix product $^{T}W_{2k}M_U$ where the computation is made in the two elements field $\Z/2\Z$ is $0$ if and only if  the number of edges of $G_{\restriction K}$ and $G'_{\restriction K}$ have the same parity for all $K$'s, a condition satisfied if $G$ and $G'$  are $k$-hypomorphic up to complementation and $k\equiv 0\;  (mod \; 4 )$ or $k\equiv 1\;  (mod \; 4 )$. According to  R.M. Wilson 
\cite{W},  the dimension (over $\Z/2\Z$) of the kernel of $^{T}W_{2k}$ is $1$ if $2\leq k\leq v-2$ and $k\equiv 0\;  (mod \; 4 )$ that is $M_U$ is the constant matrix $0$ or $1$, and thus $G'$ is equal to $G$ or to $\overline G$. If  $k\equiv 1\;  (mod \; 4 )$, the dimension is $v$ and the kernel consists of (the colum matrices of) complete bipartite graphs and their complement \cite{dlps1}. If we add the fact that $G$ and $G'$ have the same $3$-homogeneous subsets  then,  according to Theorem \ref{S(U)}, $U$ is claw and co-claw free. If $v\geq 5$,  it follows readily that $U$ is either the empty graph or the complete graph. Hence $G'$ is equal  to $G$ or to $\overline G$. If $3\leq k\leq v-3$, it turns out that  \emph{two graphs $G$ and $G'$ which  are $k$-hypomorphic up to complementation are $3$-hypomorphic up to complementation, which amounts to the fact that $G$ and $G'$ have the same $3$-homogeneous subsets}, thus in the case  $k\equiv 1\;  (mod \; 4 )$,  $G$ and $G'$ are equal up to complementation.   Indeed, a famous  Gottlieb-Kantor theorem on incidence matrices (\cite{Go,KA}) asserts that the matrix $W_{tk}$ has full row rank over the field of rational numbers provided that  $t\leq \min\{k, v-k\}$. From which follows that:

\begin{proposition} \label{down}(Proposition 2.4 \cite{dlps1}) Let  $t \leq min{(k,  v-k)}$ and $G$ and $G'$ be two graphs on the same set $V$ of $v$ vertices.  If $G$ and $G'$ are $k$-hypomorphic up to complementation then they are $t$-hypomorphic up to complementation.
\end{proposition}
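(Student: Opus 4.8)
The plan is to invoke the Gottlieb--Kantor full--rank statement just quoted, after first disposing of the per--$K$ ambiguity built into the notion of being $k$-hypomorphic up to complementation by a counting (``symmetrisation'') trick.

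For a graph $H$ on $t$ vertices and any graph $F$, write $c(H,F)$ for the number of $t$-element subsets $S$ of $V(F)$ with $F_{\restriction S}\cong H$. Since restriction and complementation commute, $(\overline F)_{\restriction S}=\overline{F_{\restriction S}}$, which gives at once the elementary identity $c(H,\overline F)=c(\overline H,F)$. I would then record the following: \emph{if $F$ and $F'$ are two graphs on the same vertex set that are isomorphic up to complementation, then for every $t$-vertex graph $H$},
$$c(H,F)+c(\overline H,F)=c(H,F')+c(\overline H,F').$$
This is clear when $F\cong F'$; and when $F\cong\overline{F'}$ the identity above gives $c(H,F)=c(H,\overline{F'})=c(\overline H,F')$ and $c(\overline H,F)=c(\overline H,\overline{F'})=c(H,F')$, so the two sides still coincide.

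Now fix a $t$-vertex graph $H$, write $\mathbf 1_{[\,\cdot\,]}$ for the $\{0,1\}$-valued indicator of a statement, and define a vector $\mathbf x=\mathbf x^{H}$ indexed by the $t$-element subsets $T$ of $V$ by
$$\mathbf x_{T}=\mathbf 1_{[G_{\restriction T}\cong H]}+\mathbf 1_{[G_{\restriction T}\cong\overline H]}-\mathbf 1_{[G'_{\restriction T}\cong H]}-\mathbf 1_{[G'_{\restriction T}\cong\overline H]}.$$
For a $k$-element subset $K$ one has $\sum_{T\subseteq K}\mathbf 1_{[G_{\restriction T}\cong H]}=c(H,G_{\restriction K})$, and similarly for the other three terms, so
$$\sum_{T\subseteq K}\mathbf x_{T}=\bigl(c(H,G_{\restriction K})+c(\overline H,G_{\restriction K})\bigr)-\bigl(c(H,G'_{\restriction K})+c(\overline H,G'_{\restriction K})\bigr)=0$$
by the displayed equality applied to $F=G_{\restriction K}$, $F'=G'_{\restriction K}$ (legitimate because $G$ and $G'$ are $k$-hypomorphic up to complementation). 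Thus the numbers $\mathbf x_{T}$ are the coefficients of a linear dependence over $\mathbb Q$ among the rows of $W_{tk}$ (those indexed by $t$-subsets); since $t\le\min\{k,v-k\}$, the Gottlieb--Kantor theorem says these rows are linearly independent, so $\mathbf x^{H}=0$ for every $t$-vertex $H$, i.e.\ for every $t$-subset $T$,
$$\mathbf 1_{[G_{\restriction T}\cong H]}+\mathbf 1_{[G_{\restriction T}\cong\overline H]}=\mathbf 1_{[G'_{\restriction T}\cong H]}+\mathbf 1_{[G'_{\restriction T}\cong\overline H]}.\qquad(\ast)$$

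To finish, fix a $t$-subset $T$ and apply $(\ast)$ with $H:=G_{\restriction T}$. The left-hand side equals $1$ if $G_{\restriction T}$ is not self--complementary and $2$ if it is. In the first case $(\ast)$ forces exactly one of $G'_{\restriction T}\cong G_{\restriction T}$ and $G'_{\restriction T}\cong\overline{G_{\restriction T}}$ to hold; in the second it forces $G'_{\restriction T}\cong G_{\restriction T}\;(=\overline{G_{\restriction T}})$. In either case $G_{\restriction T}$ and $G'_{\restriction T}$ are isomorphic up to complementation, so $G$ and $G'$ are $t$-hypomorphic up to complementation, as required. The only non--elementary input here is the quoted Gottlieb--Kantor rank theorem; the single point requiring care is the design of $\mathbf x$, where replacing the count $c(H,\cdot)$ by the complementation--symmetric count $c(H,\cdot)+c(\overline H,\cdot)$ is exactly what makes the per--$K$ ambiguity evaporate, after which the argument is a one--line appeal to linear independence.
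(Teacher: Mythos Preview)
Your proof is correct and follows precisely the route the paper indicates: the only substantive ingredient is the Gottlieb--Kantor full row rank of $W_{tk}$ for $t\le\min\{k,v-k\}$, and your complementation--symmetric count $c(H,\cdot)+c(\overline H,\cdot)$ is exactly the device needed to absorb the per--$K$ ambiguity before invoking that rank statement. The paper itself does not spell out the argument (it merely cites Proposition~2.4 of \cite{dlps1} and the Gottlieb--Kantor theorem), so your write--up is a faithful and complete expansion of the intended proof.
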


Up to now, Wilson theorem has not been applied successfully to the cases $k\equiv 2\;  (mod \; 4 )$ and $k\equiv 0\;  (mod \; 4 )$. Instead, efforts   concentrated on the structure of pairs  of $k$-hypomorphic graphs $G$ and $G'$ with the same $3$-homogeneous subsets.  The  form of their  Boolean sum as given  in (3) of Theorem \ref{S(U)} was the first step of a description. With that in hands, it was shown in \cite{dlps2} that the additional hypothesis that $G$ and $G'$ are $k$-hypomorphic to complementation for some $k$, $4\leq k\leq v-2$,  was enough to ensure that $G$ and $G'$ are isomorphic up to complementation.

\section{Proofs}\label{proofs}
Let $U$ be a graph.
For an unordered pair $e:=xy$ of distinct vertices, we set $U(e)=1$ if
$e\in E(U)$ and $U(e)=0$ otherwise.
Let   $x\in V(U)$; we
denote by
$N_U(x)$ and $d_{U}(x)$ the {\it neighborhood} and
the {\it degree} of
$x$ (that is $N_U(x):=\{ y\in V(U) : xy \in E(U)\}$ and $d_U(x):=\vert N_U(x)\vert$).
For $X\subseteq V(U)$, we
set $N_U(X):= (\cup_{x\in X}N_U(x))\setminus X$.

\subsection {Proof of Theorem \ref{K_{1,3}}.}
Trivially, the graphs described in Theorem \ref{K_{1,3}} belong to ${\rm Forb}\{K_{1,3}, \overline {K_{1,3}}\}$.
We prove the converse.

The {\it diamond} is the graph on four vertices with five edges. We say that {\it a graph} $G$ {\it contains a graph} $H$ when $G$ has an induced subgraph isomorphic to $H$.

\begin{theorem} \label{line-graph} (Harary and Holzmann \cite{HaHo}) A graph $G$ is the line-graph of
a triangle-free graph if and only if $G$ contains no claw and no diamond.
\end{theorem}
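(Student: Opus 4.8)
The plan is to route the argument through the classical Krausz clique-partition description of line graphs: a graph $G$ is a line graph exactly when $E(G)$ admits a partition into cliques such that every vertex lies in at most two of them, a preimage $H$ then being obtained by taking one vertex of $H$ per clique of the partition, adding a pendant vertex at each vertex of $G$ lying in only one clique, and joining two vertices of $H$ when the corresponding cliques meet.

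\textbf{The ``only if'' direction.} Suppose $G=L(H)$ with $H$ triangle-free. No line graph contains a claw: three pairwise non-adjacent neighbours of a vertex $e$ of $L(H)$ would be three pairwise disjoint edges of $H$ each meeting $e$, which is impossible since $e$ is a $2$-element set. For the diamond: a diamond in $L(H)$ amounts to four edges $a,b,c,d$ of $H$ with $\{a,b,c\}$ pairwise meeting, $\{a,b,d\}$ pairwise meeting, and $c\cap d=\emptyset$. Since $H$ is triangle-free, three pairwise meeting distinct edges of $H$ must share a common vertex, so $a,b,c$ all pass through some vertex $x$ and $a,b,d$ all pass through some vertex $y$. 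If $x\ne y$ then $a$ and $b$ are both the edge $xy$, contradicting $a\ne b$; if $x=y$ then $c$ and $d$ both pass through $x$, contradicting $c\cap d=\emptyset$. Hence $L(H)$ is claw-free and diamond-free.

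\textbf{The ``if'' direction.} Assume now that $G$ is claw-free and diamond-free; the key point is that the \emph{maximal} cliques of $G$ already form a Krausz partition. First, two distinct maximal cliques $C,C'$ meet in at most one vertex: if $u,v\in C\cap C'$ with $u\ne v$ then, for $a\in C\setminus C'$ and $b\in C'\setminus C$, a non-adjacent such pair would make $\{a,b,u,v\}$ induce a diamond, so every such pair is adjacent, whence $C\cup C'$ is a clique properly containing $C$, contradicting maximality; in particular each edge of $G$ lies in a unique maximal clique. Second, no vertex $v$ lies in three distinct maximal cliques $C_1,C_2,C_3$: by the first point $C_i\cap C_j=\{v\}$, so picking $a_i\in C_i\setminus\{v\}$ one gets $a_i\not\sim a_j$ for $i\ne j$ (an edge $a_ia_j$ would put $v,a_i,a_j$ into a common maximal clique, which by uniqueness forces $C_i=C_j$), and then $\{v;a_1,a_2,a_3\}$ is an induced claw. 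Hence the maximal cliques of $G$ form a Krausz partition and $G=L(H)$ for the graph $H$ built from it as above. It remains to see that $H$ is triangle-free: a triangle of $H$ consists of three maximal cliques $C_1,C_2,C_3$ of $G$ (no pendant vertex of $H$ lies on a triangle, having degree $1$) that pairwise meet, say $C_1\cap C_2=\{x\}$, $C_1\cap C_3=\{y\}$, $C_2\cap C_3=\{z\}$; if $x,y,z$ were pairwise distinct they would form a triangle of $G$, lying in some maximal clique which, being distinct from $C_1$ (as $z\notin C_1$), would meet $C_1$ in the two vertices $x,y$, a contradiction; so two of $x,y,z$ coincide, say $x=y$, whence $x\in C_1\cap C_2\cap C_3$, again a contradiction.

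\textbf{Where the difficulty lies.} The two clique lemmas are the heart of the matter and are exactly the places where claw-freeness and diamond-freeness are used; I expect the only genuinely fiddly part to be the precise statement and the degenerate cases of the Krausz construction --- isolated vertices of $G$, the case where $G$ is a single clique (recall $K_3=L(K_3)=L(K_{1,3})$, and only the second preimage is triangle-free), and the verification that the added pendant vertices create no spurious adjacencies, so that the map sending a vertex of $G$ to the pair consisting of the cliques (or clique and pendant) containing it really is an isomorphism $G\to L(H)$.
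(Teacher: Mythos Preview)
Your proof is correct, and it takes a genuinely different route from the paper's. The paper invokes Beineke's theorem (the nine forbidden induced subgraphs for line graphs) as a black box: since the claw is one of the nine and the remaining eight all contain a diamond, a claw-free diamond-free $G$ is automatically $L(R)$ for some $R$; the paper then replaces each triangle component of $R$ by a claw to obtain a triangle-free $R'$ with $L(R')=G$, and argues that any surviving triangle in $R'$ would produce a diamond in $G$. You instead go through Krausz's clique-partition characterisation, and in fact re-derive it in this special case: the diamond-free hypothesis forces distinct maximal cliques to meet in at most one vertex, and claw-freeness then forbids any vertex from lying in three maximal cliques, so the maximal cliques themselves form a Krausz partition. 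The preimage $H$ built from this partition is then shown to be triangle-free directly, using the same clique lemmas.

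The trade-off is clear. The paper's argument is shorter on the page but outsources the real work to Beineke's list (and to the unstated verification that eight of the nine graphs contain a diamond). Your argument is more self-contained and arguably more illuminating, since it shows exactly how the two forbidden subgraphs translate into the two structural constraints needed for a Krausz partition; it also yields a triangle-free preimage directly, without the component-replacement trick. The degenerate cases you flag at the end (isolated vertices, $G$ a single clique, the pendant-vertex bookkeeping) are genuine but routine, and your sketch handles them correctly; in particular, for $G=K_3$ your construction produces $H=K_{1,3}$ rather than $H=K_3$, as required.
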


 \begin{proof} Since \cite{HaHo} is very difficult to find, we include a short proof. Checking
that a line-graph of a triangle-free graph contains no claw and no diamond is
a routine matter. Conversely, let $G$ be graph with no claw and no diamond.
A theorem of Beineke \cite{Bei1} states that there exists a list $\mathcal L$ of nine graphs such
any graph that does not contain a graph from $\mathcal L$ is a line-graph. One of the
nine graphs is the claw and the eight remaining ones all contain a diamond.
So, $G = L(R)$ for some graph $R$. Let $R'$ be the graph obtained from $R$
by replacing each connected component of $R$ isomorphic to a triangle by a
claw. So, $L(R) = L(R') = G$. We claim that $R'$ is triangle-free. Else let $T$
be a triangle of $R'$. From the construction of $R'$, there is a vertex $v \notin T$ in the connected component of $R'$ that contains $T$. So we may choose $v$ with
a neighbor in $T$. Now the edges of $T$ and one edge from $v$ to $T$ induce a
diamond of $G$, a contradiction. \end{proof}

Let $G$ be in the class ${\rm Forb}\{K_{1,3}, \overline {K_{1,3}}\}$.

\noindent (1) {\it We may assume that $G$ and $\overline G$ are connected}.

\noindent Else, up to symmetry, $G$ is disconnected. If $G$ contains a vertex $v$ of degree
at least $3$, then $N_G(v)$ contains an edge (for otherwise there is a claw), so
$G$ contains a triangle. This is a contradiction since by picking a vertex in
another component we obtain a co-claw. So all vertices of $G$ are of degree at
most $2$. It follows that the components of $G$ are cycles (of length at least $4$,
or there is a co-claw) or paths, an outcome of the theorem. This proves (1).\\

\noindent (2) {\it We may assume that $G$ and $\overline G$ contain  no induced path on six vertices}.\\
Else $G$ has an induced subgraph $H$ that is either a path on at least $6$ vertices
or a cycle on at least $7$ vertices. Suppose $H$ maximal with respect to this
property. If $G = H$ then we are done. Else, by (1), we
pick a vertex $v$ in $G\setminus H$ with at least one neighbor in $H$. From the maximality
of $H$, $v$ has a neighbor $p_i$ in the interior of some $P_6 = p_1p_2p_3p_4p_5p_6$ of
$H$. Up to symmetry we assume that v has a neighbor $p_i$ where $i\in \{2,3\}$.
So $N_G(v) \cap \{p_1, p_2, p_3, p_4\}$ contains an edge $e$ for otherwise $\{p_i, p_{i-1}, p_{i+1}, v\}$ induces a claw. If $e = p_1p_2$ then $v$ must be adjacent to $p_4, p_5, p_6$ for otherwise
there is a co-claw; so $\{ v, p_1, p_4, p_6\}$ induces a claw. If $e = p_2p_3$ then $v$ must be adjacent to $p_5, p_6$ for otherwise there is a co-claw, so from the
symmetry between $\{p_1, p_2\}$ and $\{p_5, p_6\}$ we may rely on the previous case.
If $e = p_3p_4$ then $v$ must be adjacent to $p_1, p_6$ for otherwise there is a co-claw;
so $\{v, p_1, p_3, p_6\}$ induces a claw. In all cases there is a contradiction. This
proves (2).\\

\noindent (3) {\it We may assume that $G$ and $\overline G$ contain  no $A_6$}.\\
Suppose that $G$ contains $\overline{A_6}$. Then, let $aa', bb', cc'$ be three disjoint edges of
$G$ such that the only edges between them are $ab, bc, ca$. If $V (G) = \{a, a', b, b', c, c'\}$, an outcome of the theorem is satisfied, so let $v$ be a seventh
vertex of $G$. We may assume that $av \in E(G)$ (else there is a co-claw). If
$a'v \in E(G)$ then $vb', vc' \in E(G)$ (else there is a co-claw) so $\{v, a', b', c'\}$ is a
claw. Hence $a'v \notin E(G)$. We have $vb \in E(G)$ (or $\{a, a', v, b\}$ is a claw)  and similarly $vc \in E(G)$. So $\{a', v, b, c\}$ is a co-claw. This proves (3).\\

\noindent (4) {\it We may assume that $G$ and $\overline G$ contain  no diamond}.\\
Suppose for a contradiction that $\overline G$ contains a diamond. Then, 
$G$ contains a co-diamond, that is four vertices $a, b, c, d$ that induce
only one edge, say $ab$. By (1), there is a path $P$ from $\{c, d\}$ to some vertex
$w$ that has a neighbor in $\{a, b\}$. We choose such a path $P$ minimal and we
assume up to symmetry that the path is from $c$.

If $w$ is adjacent to both $a, b$ then $\{a, b,w, d\}$ induces a co-claw unless $w$
is adjacent to $d$, similarly $w$
is adjacent to $c$, so $\{w, a, c, d\}$ induces a claw. Hence $w$ is adjacent to exactly one of $a, b$, say to a. So, $P' = cPwab$ is an induced path and for convenience we rename
its vertices $p_1, \cdots , p_k$. If $d$ has a neighbor in $P'$ then, from the minimality
of $P'$, this neighbor must be $p_2$. So, $\{p_2, p_1, p_3, d\}$ induces a claw. Hence, $d$ has
no neighbor in $P'$.

By (1), there is a path $Q$ from $d$ to some vertex $v$ that has a neighbor in
$P'$. We choose $Q$ minimal with respect to this property. From the paragraph
above, $v \neq d$. Let $p_i$ (resp. $p_j$) be the neighor of $v$ in $P'$ with minimum
(resp. maximum) index. If $i = j = 1$ then $dQvp_1Pwp_{k-1}p_{k}$ is a path on  at least
$6$ vertices a contradiction to~(2). So, if $i = j$ then $i \neq 1$ and symmetrically,
$i \neq k$, so $\{p_{i-1}, p_i, p_{i+1}, v\}$ is a claw. Hence $i \neq j$. If $j > i + 1$ then $\{v, v', p_i, p_j\}$, where $v'$ is the neighbor of $v$ along $Q$, is a claw. So, $j = i+1$.
So $vp_ip_j$ is a triangle. Hence $P' = p_1p_2p_3p_4$, $Q = dv$ and $i = 2$, for otherwise
there is a co-claw. Hence, $P' \cup Q$ form an induced $\overline {A_6}$ of $G$, a contradiction
to (3). This proves (4).

Now $G$ is connected and contains no claw and no diamond. So, by
Theorem \ref{line-graph}, $G$ is the line-graph of some connected triangle-free graph $R$.
Symmetrically, $\overline G$ is also a line-graph. These graphs are studied in \cite{Bei2}.

If $R$ contains a vertex $v$ of degree at least $4$ then all edges of $R$ must
be incident with $v$, for else an edge $e$ non-incident with $v$ together with three
edges of $R$ incident with $v$ and non-adjacent to $e$ form a co-claw in $G$. So all
vertices of $R$ have degree at most $3$ since otherwise, $G$ is a clique, a contradiction to (1). We may assume that $R$ has a vertex $a$
of degree $3$ for otherwise $G$ is a path or a cycle. Let $b, b', b''$ be the neighbors
of $a$. Since $a$ has degree $3$, all edges of $R$ must be incident with $b, b'$ or $b''$ for
otherwise $G$ contains a co-claw.

If one of $b, b', b''$, say $b$, is of degree $3$, then $N_R(b) = \{a, a', a''\}$ and all edges
of $R$ are incident with one of $a, a', a''$ (or there is a co-claw). So $R$ is a subgraph
of $K_{3,3}$. So, since $P_9 = L(K_{3,3})$, $G = L(R)$ is an induced subgraph of $P_9$, an
outcome of the theorem. Hence we assume that $b, b', b''$ are of degree at most
$2$. If $\vert N_R(\{b, b', b''\})\setminus \{a\}\vert \geq 3$, then $R$ contains the pairwise non-adjacent edges
$bc, b'c', b''c''$ say, and the edges $ab, ab', ab'', bc, b'c', b''c''$ are vertices of $G$ that
induce an $\overline{A_6}$, a contradiction to (3). So, $\vert N_R(\{b, b', b''\})\setminus \{a\}\vert \leq 2$ which
means again that $R$ is a subgraph of $K_{3,3}$. \hfill $\square$

\subsection{Ingredients for the proof of Theorem \ref{S(U)}.}
The proof of  the equivalence between Properties (1) and (2) of Theorem \ref{S(U)} relies on the following lemma.

 \begin{lemma} \label{G,G',U}   Let $G$ and  $G'$ be two graphs on the same vertex set $V$ and let
  $U:=G\dot{+}G'$. Then, the following properties are equivalent:
  \begin{enumerate}[{(a)}]
 \item     $G$ and $G'$ have the same $3$-element homogeneous subsets;
 \item $U(xy)=U(xz)\neq U(yz) \Longrightarrow   G(xy)\neq G(xz)$ for all distinct elements $x,y,z$ of $V$.
  \item  The sets $A_1:=E(U)\cap E(G)$ and  $A_2:=E(U)\setminus E(G)$  divide
$V(S({U}))$ into two
   independent sets and also
    the sets $B_1:=E({\overline U})\cap E(G)$ and
$B_2:=E({\overline U})\setminus E(G)$  divide
    $V(S({\overline U}))$ into two  independent sets.
 \end{enumerate}  \end{lemma}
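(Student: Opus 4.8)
The plan is to record two elementary observations and then prove the two equivalences (a)$\Leftrightarrow$(b) and (b)$\Leftrightarrow$(c) separately. The first observation is that, since $U=G\dot{+}G'$, for every pair $e\in[V]^2$ one has $G'(e)=G(e)+U(e)$ with the computation in $\Z/2\Z$; equivalently $G'(e)=G(e)$ if $e\notin E(U)$ and $G'(e)=1-G(e)$ if $e\in E(U)$. The second is that a $3$-element set $\{x,y,z\}$ is homogeneous in a graph $H$ exactly when $H$ is constant on the three pairs $xy,xz,yz$, and that a $\{0,1\}$-valued function on these three pairs is \emph{non-constant} precisely when, after renaming $x,y,z$ suitably, it takes some value $u$ on $xy$ and on $xz$ and the value $1-u$ on $yz$; the renaming just puts at $x$ the common vertex of the two pairs on which the function agrees. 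In particular the hypothesis ``$U(xy)=U(xz)\neq U(yz)$'' of the implication in (b) is a relabeling-friendly way of saying ``$U$ is non-constant on $\{x,y,z\}$''. I will also use the trivial remark that, whenever $U(xy)=U(xz)$, the inequality $G(xy)\neq G(xz)$ is equivalent to $G'(xy)\neq G'(xz)$ (add $U(xy)$ to both sides), so that both (a) and the conclusion of (b) are symmetric in $G$ and $G'$.

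For (a)$\Leftrightarrow$(b) I would argue by contraposition in both directions. To show $\neg$(b)$\Rightarrow\neg$(a): pick distinct $x,y,z$ with $U(xy)=U(xz)=u$, $U(yz)=1-u$ and $G(xy)=G(xz)=g$. Then $G'(xy)=G'(xz)=g+u$ while $G'(yz)=G(yz)+(1-u)$, and a one-line split on $G(yz)\in\{g,1-g\}$ finishes it: if $G(yz)=g$ the triple is homogeneous in $G$, but $G'(yz)=1-(g+u)$ differs from $g+u$, so it is not homogeneous in $G'$; if $G(yz)=1-g$ the triple is not homogeneous in $G$, but $(1-g)+(1-u)\equiv g+u\pmod 2$, so it is homogeneous in $G'$. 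Either way (a) fails. Conversely, for $\neg$(a)$\Rightarrow\neg$(b): take a triple homogeneous in exactly one of $G,G'$ and, by the symmetry noted above, assume it is homogeneous in $G$, say $G\equiv g$ there. Since $G'=G+U$ pairwise and $G$ is constant on the triple while $G'$ is not, $U$ must be non-constant on it; renaming as above gives $U(xy)=U(xz)\neq U(yz)$ with $G(xy)=G(xz)=g$, i.e.\ the hypothesis of (b) holds but its conclusion fails.

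For (b)$\Leftrightarrow$(c) the work is pure translation. Since $V(S(U))=E(U)$ and $A_1=E(U)\cap E(G)$, $A_2=E(U)\setminus E(G)$ partition $V(S(U))$, the assertion that $A_1$ and $A_2$ are both independent in $S(U)$ is the same as the assertion that no edge of $S(U)$ is monochromatic for this partition; and by definition an edge of $S(U)$ is a pair $\{xy,xz\}$ with $xy,xz\in E(U)$ and $yz\notin E(U)$, which is monochromatic exactly when $G(xy)=G(xz)$. Hence the first half of (c) says: $U(xy)=U(xz)=1$ and $U(yz)=0$ imply $G(xy)\neq G(xz)$. Using $E(\overline U)=[V]^2\setminus E(U)$, the same reasoning applied to $S(\overline U)$, $B_1$, $B_2$ shows the second half of (c) says: $U(xy)=U(xz)=0$ and $U(yz)=1$ imply $G(xy)\neq G(xz)$. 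The conjunction of these two implications is precisely the implication in (b), since its hypothesis ``$U(xy)=U(xz)\neq U(yz)$'' breaks up into exactly these two cases.

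I do not expect a real obstacle: the entire content is the dictionary between the three formulations (homogeneous triples via the identity $G'=G+U$ over $\Z/2\Z$; the single implication in (b); independence in the edge-graphs $S(U)$ and $S(\overline U)$). The only points needing a little care are keeping track, in the relabeling step, of which vertex becomes ``$x$'', and the small $\Z/2\Z$ arithmetic such as $(1-g)+(1-u)\equiv g+u$, together with the symmetry remark that makes the reduction ``assume homogeneous in $G$'' legitimate in the proof of $\neg$(a)$\Rightarrow\neg$(b).
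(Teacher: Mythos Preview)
Your proof is correct and follows essentially the same route as the paper's: both split (b) into the case $U(xy)=U(xz)=1$ (edges of $S(U)$) and the case $U(xy)=U(xz)=0$ (edges of $S(\overline U)$), which gives (b)$\Leftrightarrow$(c) by pure translation, and both handle (a)$\Leftrightarrow$(b) by the same case analysis on the value of $G$ at the third pair $yz$. The only cosmetic difference is that you package the bookkeeping as arithmetic in $\Z/2\Z$ via $G'=G+U$ and argue both implications by contraposition, whereas the paper argues (a)$\Rightarrow$(b) by contradiction and (b)$\Rightarrow$(a) directly; your explicit symmetry remark (allowing the reduction to ``homogeneous in $G$'') is a point the paper leaves implicit.
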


 \begin{proof}   Observe first that Property (b) is equivalent to
the conjunction of the following properties:\\
$ (b_{U})$: If $uv$ is an edge of $S(U)$ then $u\in E(G)$ iff      $v\notin E(G)$.            \\
 and\\
$  (b_{\overline U})$: If $uv$ is an edge of $S(\overline U)$ then $u\in E(G)$ iff      $v\notin E(G)$.

\noindent  $(a)  \Longrightarrow  (b)$. Let us show $(a)  \Longrightarrow (b_{U})$.\\
Let  $uv \in E(S(U))$, then $u,v\in E(U)$.
 By contradiction, we may suppose  that $u,v\in E(G)$ (the other case implies $u,v\in E(G')$
  thus is similar).   Since     $u$ and $v$ are edges
 of $U=G\dot{+}G'$ then   $u,v\notin E(G')$. Let $w:=yz$ such that   $u=xy$,  $v=xz$. Then
 $w\notin E(U)$ and thus $w \in E(G)$ iff   $w\in E(G')$. \\
 If $w \in E(G)$,
 $\{x,y,z\}$  is  a homogeneous subset of $G$. Since $G$ and $G'$ have the same $3$-element homogeneous subsets,
  $\{x,y,z\}$  is an homogeneous subset of $G'$. Hence, since $u,v\notin E(G')$,
    $w=yz     \notin E(G')$, thus $w\notin E(G)$, a contradiction.\\
If $w  \notin E(G)$, then $w  \notin E(G')$; since $u,v \notin E(G')$ it follows that $\{x,y,z\}$  is a homogeneous subset of $G'$. Consequently $\{x,y,z\}$  is  a homogeneous subset of $G$.
 Since $u,v\in E(G)$, then $w  \in E(G)$, a contradiction.\\
The implication    $a)  \Longrightarrow (b_{\overline U})$ is similar.\\
$ (b)  \Longrightarrow (a)$. Let $T$ be a $K_{3}$ of $G$.
Suppose that $T$ is not a homogeneous subset of $G'$ then we may suppose
$T=\{u,v,w\}$ with $u,v\in E(G')$ and $w\notin E(G')$
or   $u,v\in E(\overline {G'})$ and
  $w\notin E(\overline {G'})$. In the first case $uv\in E(S(\overline U))$, which contradicts Property
   $ (b_{\overline U})$,
  in the second case     $uv \in E(S(U))$, which contradicts  Property $ (b_{U})$.\\
 $ (b)    \Longrightarrow (c)$. First $V(S(U)) =E(U)=A_1\cup A_2$ and
 $V(S(\overline U))=E(\overline U)=B_1\cup B_2$.
 Let  $u,v$ be two distinct elements
of $A_1$ (respectively  $A_2$). Then  $u,v\in E(G)$ (respectively   $u,v\notin E(G)$). From $ (b_{U})$  we have
 $uv\notin E(S(U))$. Then $A_1$ and $A_2$ are   independent sets of $V(S(U))$.
 The proof  that $B_1$ and $B_2$ are  independent sets of $V(S({\overline U}))$ is similar.\\
$(c)  \Longrightarrow (b)$. This implication is trivial. \end{proof}

\subsection {Proof of Theorem \ref{S(U)}.}\label{Proof of TheoremS(U)}
Implication $ (1)  \Longrightarrow (2)$ follows directly from  implication $(a)  \Longrightarrow (c)$ of Lemma \ref{G,G',U}.  Indeed,  Property (c) implies trivially that  $S(U)$ and $S(\overline U)$ are bipartite. \\
$ (2)  \Longrightarrow (1)$.     Suppose   that $S(U)$ and  $S(\overline U)$  are bipartite.   Let $\{A_1, A_2\}$ and
$\{B_1, B_2\}$  be respectively a partition of   $V(S(U))=E(U)$ and  $V(S(\overline U))=E(\overline U)$ into independent sets. Note that $A_i\cap B_j =\emptyset$, for $i,j\in \{1,2\}$.
Let $G,G'$ be two graphs with the same vertex set as $U$ such that $E(G)=A_1\cup B_1$ and     $E(G')=A_2\cup B_1$.
Clearly  $E(G\dot{+}G')=A_1\cup A_2=E(U)$. Thus $U=G\dot{+}G'$.
To conclude that Property $(1)$ holds, it suffices to  show that $G$ and $G'$ have the same $3$-element homogeneous subsets, that is Property (a) of Lemma \ref{G,G',U} holds. For that, note that $A_1=E(U)\cap E(G)$, $A_2=E(U)\setminus E(G)$, $B_1=E(\overline U)\cap E(G)$ and $B_2=E(\overline U)\setminus E(G)$ and thus Property (c) of
Lemma \ref{G,G',U} holds. It follows that Property (a) of this lemma holds.

The proof of implication $(2)  \Longrightarrow (3)$ was given in Section 1. For the converse implication, let $U$ be a graph satisfying Property (3). It is clear from  Figure 1  that  $S(P{_9})$ is bipartite (vertical edges and horizontal edges form a partition). Since $\overline {P_{9}}$ is isomorphic to $P_9$,  $S(\overline {P{_9}})$ is bipartite too. Thus, if $U$ is isomorphic to an induced subgraph of $P_{9}$, Property (2) holds. If not, we may suppose that  the connected components of $U$ are cycles of even length or paths (otherwise, replace $U$ by $\overline U$). In this case, $S(U)$ is trivially bipartite. In order to prove that Property 2 holds, it suffices to prove that  $S(\overline U)$ is bipartite too. This is a direct consequence of the following claim:

\begin{claim} If $U$ is a bipartite graph, then $S(\overline U)$ is bipartite too.
\end{claim}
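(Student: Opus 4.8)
The plan is to produce an explicit $2$-colouring of $V(S(\overline U)) = E(\overline U)$ witnessing that $S(\overline U)$ is bipartite, using the bipartition of $U$. Write $V = X \cup Y$ for the (fixed) bipartition of the bipartite graph $U$, so every edge of $U$ has one endpoint in $X$ and one in $Y$. An edge of $\overline U$ is then either of \emph{$XX$-type} (both endpoints in $X$), of \emph{$YY$-type} (both in $Y$), or of \emph{$XY$-type} (one endpoint in each). I would first deal with the easy observation that the set of $XX$-type edges and the set of $YY$-type edges are each independent in $S(\overline U)$: if $u = x_1x_2$ and $v = x_1x_3$ are both $XX$-type and $uv \in E(S(\overline U))$, then by definition $x_2x_3 \notin E(\overline U)$, i.e.\ $x_2x_3 \in E(U)$, which is impossible since $x_2,x_3 \in X$ and $X$ is independent in $U$; the $YY$-type case is symmetric.

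Next I would colour the $XY$-type edges. For an edge $u = xy$ of $\overline U$ with $x \in X$, $y \in Y$, this means $xy \notin E(U)$. I would try the colouring that sends $u = xy$ to a class determined by a parity or a second partition — the natural candidate is to further split $V$ using the given bipartition data, but since $S(\overline U)$ restricted to $XY$-type vertices may still contain odd structure, the real content is here. Concretely, observe that if $u = xy$ and $v = xy'$ are two $XY$-type edges sharing their $X$-endpoint $x$, and $uv \in E(S(\overline U))$, then $yy' \notin E(\overline U)$, hence $yy' \in E(U)$; but $y,y' \in Y$, again contradicting that $Y$ is independent in $U$. Symmetrically two $XY$-type edges sharing their $Y$-endpoint are never adjacent in $S(\overline U)$. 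So an edge of $S(\overline U)$ joining two $XY$-type vertices must join $xy$ to $x'y'$ with $x \neq x'$ and $y \neq y'$ — but by the definition of the edge-graph, adjacent vertices of $S(\overline U)$ \emph{must} share a vertex of $\overline U$. Therefore there are \emph{no} edges of $S(\overline U)$ between two $XY$-type vertices at all, and the same bookkeeping shows the only edges of $S(\overline U)$ run between an $XX$-type vertex and an $XY$-type vertex sharing the common $X$-vertex, or between a $YY$-type vertex and an $XY$-type vertex sharing the common $Y$-vertex.

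Consequently the three classes $\{XX\text{-type}\},\{YY\text{-type}\},\{XY\text{-type}\}$ give a proper $3$-colouring of $S(\overline U)$ in which the class $\{XY\text{-type}\}$ is moreover independent and meets every edge; merging the two classes $\{XX\text{-type}\}$ and $\{YY\text{-type}\}$ into one part $P_1$ and taking $P_2 := \{XY\text{-type}\}$ as the other part yields a bipartition of $S(\overline U)$, since by the analysis above every edge of $S(\overline U)$ has exactly one endpoint in each of $P_1$ and $P_2$. This proves the claim. The step I expect to be the only genuine obstacle is the careful case analysis verifying that an edge of $S(\overline U)$ is never incident to two vertices of the same type among $XX$, $YY$, $XY$ — the three sub-cases are routine but must be checked, and each reduces, via the definition of $S$, to the statement that $X$ and $Y$ are independent in $U$.
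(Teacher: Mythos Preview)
Your proof is correct and is in fact the same argument as the paper's, only written out in full case-analysis form. The paper simply takes a proper $2$-colouring $c: V(U)\to \mathbb{Z}/2\mathbb{Z}$ and defines $c'(\{x,y\}) := c(x)+c(y)$ on $V(S(\overline U)) = E(\overline U)$; unpacking this with $c(X)=0$, $c(Y)=1$ gives exactly your bipartition $P_1 = \{XX\text{-type}\}\cup\{YY\text{-type}\}$ (colour $0$) and $P_2 = \{XY\text{-type}\}$ (colour $1$), and the single verification that $c'$ is proper --- namely that an edge of $S(\overline U)$ forces the ``third side'' into $E(U)$, hence its endpoints into different colour classes of $c$ --- is precisely the content of each of your sub-cases.
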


\begin{proof}  If $c: V(U) \rightarrow \Z/2\Z$ is a colouring of $U$, set $c':
V(S(\overline U))\rightarrow \Z/2\Z$ defined by $c'(\{x, y\}):= c(x)+c(y)$. \end{proof}

 With this, the proof of Theorem \ref{S(U)} is complete.

\subsection{A direct proof for (3) $\Longrightarrow$ (1) of Theorem \ref{S(U)}.}
In [6] we gave all possible decompositions of a graph $U$ satisfying (1) into a Boolean sum $G\dot{+}G'$ where $G$ and $G'$ have the same $3$-element homogeneous sets.

When $U = P_9$, a decomposition  $U = G\dot{+}G'$
can be given by a picture (see Figure 2).

\begin{figure}[H]
\begin{center}
\includegraphics[width=3in]{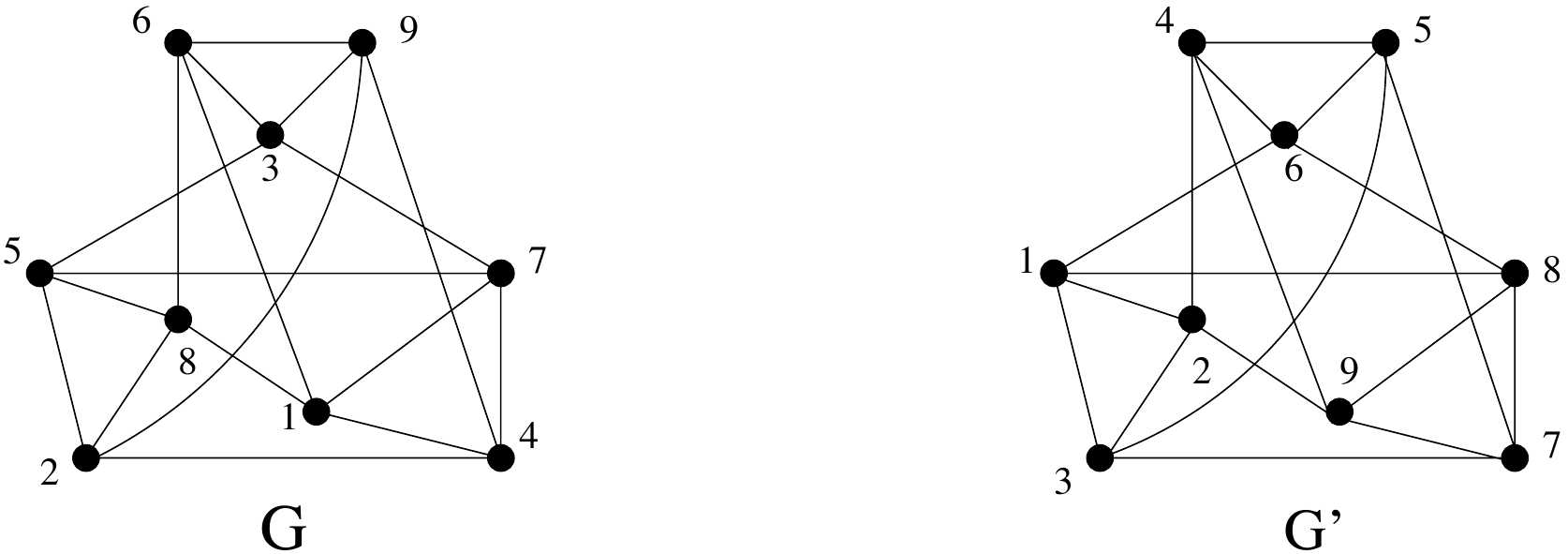}
\end{center}
\caption{} \label{1}
\end{figure}

For the other cases, we introduce the following notation. 

Let $n\geq 2$. Let $X_n$ be an $n$-element set,  $x_0,\cdots , x_{n-1}$ be an enumeration of $X_n$,  $X_n^{0}:= \{x_i\in X_n: i \equiv 0  \ (mod \ 2)\}$ and $X_{n}^{1}:= X_n\setminus X_{n}^{0}$.  Set $R_n:=[X_{n}^{1}]^2\cup [X_n^{2}]^2$, $S_n:=\{\{x_{2i},x_{2i+1}\}:  2i<n\}$, $S'_n:=\{\{x_{2i+1}, x_{2i+2}\}: 2i< n-1\}$. Let $M_n$ and $M'_n$ be the graphs  with vertex set $X_n$ and edge sets $E(M_n):= R_n\cup S_n$ and  $E(M'_n):= R_n\cup S'_n$ respectively. Let $M''_n:= (X_n, R_n\cup S'_n\cup \{\{x_0,x_{n-1}\}\})$ for $n$ even, $n\geq 4$.  For $n\in \{6,7\}$ we give a picture (see Figure 3). For convenience, we set $M_1=M'_1$ the graph with one vertex and we put $V(M_1):=X_1^{0}:=\{x_0\}$. When $G$ is a graph of the form $M_n$, $M'_n$, or $M''_n$, with $n\geq 1$, we put $V^{0}(G):=X_{n}^{0}$ and  $V^{1}(G):=X_{n}^{1}$.  \\

\begin{figure}[H]
\begin{center}
\includegraphics[width=3.4in]{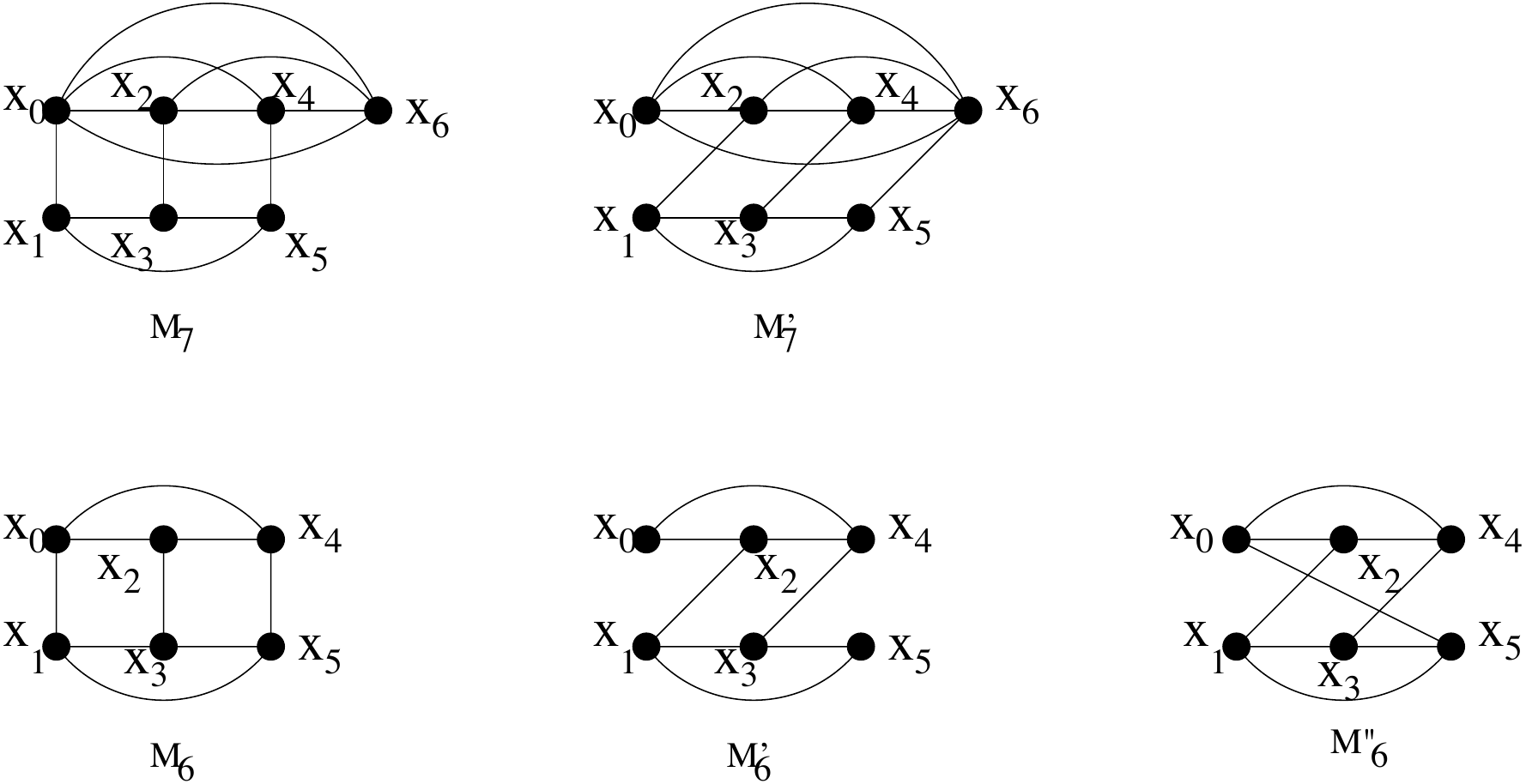}
\caption{} \label{4}
\end{center}
\end{figure}

When $U$ is a cycle of even size $2n$,  a decomposition  $U = G\dot{+}G'$
can be given by  $G=M_{2n}$ and $G'=M''_{2n}$.
When $U$ is a path of size $n$,  a decomposition  $U = G\dot{+}G'$
can be given by  $G=M_n$ and $G'=M'_n$.

When the connected components  of $U$ are cycles of even length or paths,  we define $G$ and $G'$ satisfying $U = G\dot{+}G'$ as follows:   For each connected component  $C$ of $U$, $(G_C, G'_C)$ is given by the previous step. For distinct connected components  $C$ and $C'$ of $U$, $x\in C$, $x'\in C'$, $xx'\in E(G)$ (and $xx'\in E(G')$) if and only if $x\in V^{0}(G_C)$ and $x'\in V^{0}(G_{C'})$, or $x\in V^{1}(G_C)$ and $x'\in V^{1}(G_{C'})$.

When the connected components of $\overline U$ are cycles of even length or paths, from $\overline U = \overline G\dot{+}G'$, the previous step gives a pair $(\overline G ,G')$, then a pair $(G ,G')$.\\

\noindent{\bf Acknowledgements}\\

We  thank S. Thomass\'e  for his helpful comments. We thank the anonymous referee for his careful examination of the paper and his suggestions.

\end{document}